\newtheorem{lemma}{Lemma}
\newtheorem{theorem}{Theorem}
\newtheorem{corollary}{Corollary}
\title{A Correspondence Between Maximal Surfaces and Timelike Minimal Surfaces in $\mathbb{L}^3$}
\author{Aryaman Patel }
\begin{document}
\date{}
\maketitle

\begin{abstract}
    We show that to every maximal surface with conelike singularities in Lorentz-Minkowski space $\mathbb{L}^3$ that can be locally represented as the graph of a smooth function, there exists a corresponding timelike minimal surface in $\mathbb{L}^3$. There exists a linear transformation between such a maximal surface and its corresponding timelike minimal surface and it maps the singularities of one to the singularities of the other. Moreover, this transformation establishes a one-one correspondence between such maximal surfaces and timelike minimal surfaces and also preserves the one-one property of the Gauss map. This leads to a Kobayashi type theorem for timelike minimal surfaces in $\mathbb{L}^3$. Finally, we derive some non-trivial identities using existing Euler-Ramanujan identities, and some familiar timelike minimal surfaces in parametric form.  
\end{abstract}

\section{Introduction}

Maximal surfaces and timelike minimal surfaces in Lorentz-Minkowski space $\mathbb{L}^3$ have been studied extensively in the recent years and they are well characterized\cite{Koba,Koba2}. In this article we further explore the relationship between them.\\
We consider maximal surfaces and timelike minimal surfaces with conelike singularities that can be locally expressed as graphs of smooth functions. Any graph $\varphi=(x,y,z(x,y))$ is a maximal surface in $\mathbb{L}^3$ if the function $z(x,y)$ is smooth and satisfies the following partial differential equation
\begin{align*}
    (1-z_x^2)z_{yy}+2z_xz_yz_{xy}+(1-z_y^2)z_{xx}=0
\end{align*}
known as the maximal surface equation. Similarly, any graph $\psi=(u,v,w(u,v))$ is a timelike minimal surface in $\mathbb{L}^3$ if it satisfies the following partial differential equation
\begin{align*}
    (w_u^2-1)w_{vv}-2w_uw_vw_{uv}+(1+w_v^2)w_{uu}=0
\end{align*}
known as the Born-Infeld equation.\\
It has been shown in \cite{DS} that the Born-Infeld equation and the maximal surface equation are related by a Wick rotation in the first variable. The Wick rotation is a well-known linear transformation which gives a way to generate a maximal surface from a timelike minimal surface, and vice-versa. This naturally leads to the question of whether one can associate to any maximal surface in $\mathbb{L}^3$ a corresponding timelike minimal surface via a linear transformation. We show that this is true for certain maximal surfaces, and that the transformation is indeed linear with some nice properties. This helps characterize timelike minimal surfaces with conelike singularities whose Gauss map is one-one, using Kobayashi's result\cite{Koba2}. In the spirit of the second part of \cite{DS}, we use Ramanujan's identities and parametric forms of certain timelike minimal surfaces to derive non-trivial identities.

\section{ Timelike minimal surfaces with conical singularities}

\begin{lemma}
For every maximal surface in $\mathbb{L}^3$, there exists a corresponding timelike minimal surface and the correspondence is given by a \emph{linear transformation}.
\end{lemma}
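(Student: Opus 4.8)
The plan is to realise the correspondence as a \emph{Wick rotation} in one of the two surface parameters, made rigorous by the analytic continuation that the maximal surface equation automatically provides.

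\emph{Setting up the transformation.} First I would note that on the regular part of a maximal surface the defining function $z$ satisfies $z_x^2+z_y^2<1$, so the maximal surface equation is uniformly elliptic there; hence $z$ is real-analytic and extends to a holomorphic function $\hat z$ on a neighbourhood of its chart in $\mathbb C^2$. This legitimises the linear substitution $(u,v)=(x,iy)$, i.e. the function $w(u,v):=\hat z(u,-iv)$, together with the companion complex-linear map $\mathcal W(a_1,a_2,a_3)=(a_1,ia_2,a_3)$ of $\mathbb L^3$, which carries the maximal graph to the graph of $w$ once the image is re-expressed over the first two coordinates. The map $\mathcal W$ is linear and invertible, with inverse the opposite rotation $a_2\mapsto -ia_2$; this $\mathcal W$ is the linear transformation claimed in the lemma.

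\emph{Checking the equation and the side conditions.} That $\mathcal W$ intertwines the two PDEs is essentially the observation of \cite{DS}: with $w_u=\hat z_x$, $w_v=-i\hat z_y$, $w_{uu}=\hat z_{xx}$, $w_{uv}=-i\hat z_{xy}$ and $w_{vv}=-\hat z_{yy}$, substituting into the Born--Infeld operator $(w_u^2-1)w_{vv}-2w_uw_vw_{uv}+(1+w_v^2)w_{uu}$ returns, evaluated at the rotated argument, exactly the maximal surface operator $(1-z_x^2)z_{yy}+2z_xz_yz_{xy}+(1-z_y^2)z_{xx}$, which vanishes; running the same computation in reverse shows the opposite rotation sends Born--Infeld solutions back to maximal surfaces, so the correspondence is a bijection. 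Since $\mathcal W$ is a linear isomorphism of the ambient space, it sends the lightlike locus $\{z_x^2+z_y^2=1\}$ — the conelike singular set — onto the corresponding set for the timelike minimal surface and conjugates the two Gauss maps, which supplies the extra properties used for the later theorems.

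\emph{The main obstacle: reality.} The one genuinely delicate point is that $w(u,v)=\hat z(u,-iv)$ need not be real-valued for real $(u,v)$, so a literal application of $\mathcal W$ only produces a complexified surface, whereas I need an honest real timelike minimal surface in $\mathbb L^3$. I would resolve this by performing the rotation not on the Cartesian graph but on the underlying Weierstrass--Björling data of the maximal surface — a real-analytic coordinate curve together with its timelike unit normal — and replacing the imaginary unit $i$ by the split-complex unit $\tau$, $\tau^2=1$. Because $\mathbb R\oplus\tau\mathbb R$ is a real algebra and a real-analytic function evaluated at a split-complex argument has real ``$1$''- and ``$\tau$''-parts, the split-complex real part of the integrated rotated data is automatically $\mathbb R^3$-valued; by the paracomplex representation formula it is a timelike minimal surface, it meets the original surface along the chosen coordinate curve, and transporting the computation above through the representation shows it is obtained from the maximal surface precisely by $\mathcal W$. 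What then remains — and is where I expect to spend the most effort — is to check that this map is an immersion with Lorentzian induced metric and can again be written as a graph; here the graph hypothesis and the sign of $1-z_x^2-z_y^2$ enter, and I would conclude it from the non-vanishing of the Weierstrass height differential.
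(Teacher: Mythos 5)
Your route is genuinely different from the paper's. The paper does not construct the transformation: it \emph{posits} a change of variables $(x,y)\to(u,v)$ under which the graph $(u,v,z(u,v))$ satisfies the Born--Infeld equation, substitutes the chain-rule expressions for $z_u,z_v,z_{uu},z_{uv},z_{vv}$ into that equation, and demands that the result coincide term-by-term with the maximal surface equation; comparing coefficients forces all second partials $x_{uu},x_{uv},\dots$ to vanish, whence the map is linear. You instead write the linear map down explicitly as the Wick rotation $(u,v)=(x,iy)$, justify the substitution by elliptic regularity (real-analyticity of $z$ on the set $z_x^2+z_y^2<1$), and verify directly that it intertwines the two operators. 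What the paper's argument buys is a uniqueness-of-form statement (any such correspondence is linear); what yours buys is an actual existence proof, which the lemma also asserts and which the paper's argument does not really supply.

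Your ``main obstacle: reality'' is not an artifact of your approach --- it is present, unacknowledged, in the paper as well: the paper's system (3) contains the condition $(x_uy_v-x_vy_u)^2=-1$, which has no real solutions, so the ``linear transformation'' the paper derives is necessarily complex, i.e.\ exactly your Wick rotation. (The paper only confronts this later, in the proof of Theorem 1, by extending to $\mathbb{C}^2$ and restricting back to the reals.) Your proposed repair via split-complex Bj\"orling data is a sensible way to extract an honest real timelike minimal surface, but as you say yourself, the final steps --- that the resulting map is a Lorentzian immersion expressible as a graph, and that it is genuinely the $\mathcal{W}$-image of the original surface --- are left as a plan rather than carried out. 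Until those are done your argument, like the paper's, establishes the correspondence only at the level of complexified graphs; flagging this honestly is a point in your favour, but the lemma as stated is not yet fully proved by either argument.
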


\begin{proof}
Let $\varphi$ be a maximal surface in $\mathbb{L}^3$. Locally, $\varphi$ can be expressed as the graph of some smooth function $z(x,y)$ of canonical variables $x,y$ i.e. $\varphi(x,y)=(x,y,z(x,y))$. Then, the function satisfies the following partial differential equation 
\begin{equation}
    (1-z_x^2)z_{yy}+2z_xz_yz_{xy}+(1-z_y^2)z_{xx}=0,
\end{equation}
also known as the maximal surface equation. Now consider a general transformation $ \psi: (x,y)\to (u,v)$ such that the resultant surface  is a timelike minimal surface in $\mathbb{L}^3$ of the form $(u, v, z(u,v))$ such that $z(u,v)$ is a smooth function that satisfies the following partial differential equation
\begin{equation}
    (z_u^2-1)z_{vv}-2z_uz_vz_{uv}+(1+z_v^2)z_{uu}=0.
\end{equation}

 In fact the most general local transformation (preserving regularity) from a maximal surface to  its corresponding  timelike minimal surface can be realised like this.   First we make the transformation  $\phi(x,y) = (u_1, v_1)$ in which the corresponding  timelike minimal surface can be written as $(x(u_1, v_1), y( u_1, v_1), z(u_1, v_1))$ and then (possibly rotating the timelike minimal surface) we make  a further change of variables $ (u,v) =\tau(u_1, v_1)$  such that the timelike  minimal surface is of the form $ (u, v, z(u,v))$, i.e. it is the graph of a function. Denote $\psi = \tau \circ \phi$.  

We shall show that $\psi $ is a linear transformation.

Using the fact that $\frac{\partial}{\partial u}=\frac{\partial}{\partial x}\frac{\partial x}{\partial u}+\frac{\partial}{\partial y}\frac{\partial y}{\partial u}$ and $\frac{\partial}{\partial v}=\frac{\partial}{\partial x}\frac{\partial x}{\partial v}+\frac{\partial}{\partial y}\frac{\partial y}{\partial v}$, we can rewrite the partial derivatives $z_u$ and $z_v$ as follows
\begin{align*}
    z_u=x_uz_x+y_uz_y,\,\,\,z_v=x_vz_y+y_vz_y.
\end{align*}
Similarly, the second partial derivatives $z_{uu}$, $z_{uv}$ and $z_{vv}$ can be rewritten as follows
\begin{align*}
    z_{uu}=x_{uu}z_x+x_u^2z_{xx}+2x_uy_uz_{xy}+y_{uu}z_y+y_u^2z_{yy}\\
    z_{uv}=x_{uv}z_x+x_ux_vz_{xx}+(x_uy_v+x_vy_u)z_{xy}+y_{uv}z_y+y_uy_vz_{yy}\\
    z_{vv}=x_{vv}z_x+x_v^2z_{xx}+2x_vy_vz_{xy}+y_{vv}z_y+y_v^2z_{yy}.
\end{align*}
Using the above expressions, the timelike minimal surface equation (2) can be rewritten as follows
\begin{align*}
    (x_u^2z_x^2+y_u^2z_y^2+2x_uy_uz_xz_y-1)(x_{vv}z_x+x_v^2z_{xx}+2x_vy_vz_{xy}+y_{vv}z_y+\\y_v^2z_{yy})
    -2(x_ux_vz_x^2+y_uy_vz_y^2+(x_uy_v+y_ux_v)z_xz_y)(x_{uv}z_x+x_ux_vz_{xx}\\+(x_uy_v+x_vy_u)z_{xy}+y_{uv}z_y+y_uy_vz_{yy})
    +(x_v^2z_x^2+y_v^2z_y^2+2x_vy_vz_xz_y\\+1)(x_{uu}z_x+x_u^2z_{xx}+2x_uy_uz_{xy}+y_{uu}z_y+y_u^2z_{yy})=0.
\end{align*}

Since it is known that $z(x,y)$ satisfies the maximal surface equation, we require that the above equation is  exactly  the equation (1). This leads to a system of twelve equations in the first and second partial derivatives of $x$ and $y$ with respect to $u$ and $v$. We split this system into two sets as follows. The first set is a system of four equations in only the first partial derivatives of $x$, $y$ with respect to $u$, $v$ given by 
\begin{align}
    x_u^2-x_v^2=1,\,\,\,y_u^2-y_v^2=1,\,\,\,(x_uy_v-x_vy_u)^2=-1,\,\,\,x_uy_u-x_vy_v=0.
\end{align}
The second set is a system of eight equations in the first and second partial derivatives of $x$, $y$ with respect to $u$, $v$ given by 
\begin{align*}
    x_{vv}x_u^2-2x_{uv}x_ux_v+x_{uu}x_v^2=0,\,\,\,x_{vv}y_u^2-2x_{uv}y_uy_v+x_{uu}y_v^2=0,\\
    y_{vv}x_u^2-2y_{uv}x_ux_v+y_{uu}x_v^2=0,\,\,\,y_{vv}y_u^2-2y_{uv}y_uy_v+y_{uu}y_v^2=0,\\
    x_{vv}x_uy_u+x_{uu}x_vy_v-x_{uv}(x_uy_v+x_vy_u)=0,\\
    y_{vv}x_uy_u+y_{uu}x_vy_v-y_{uv}(x_uy_v+x_vy_u)=0,\\
    x_{uu}-x_{vv}=0,\,\,\,y_{uu}-y_{vv}=0.
\end{align*}
From the last line in the above system of equations, we can set $x_{uu}=x_{vv}=a$, $x_{uv}=b$ and $y_{uu}=y_{vv}=c$, $y_{uv}=d$. Using any one of the two equations in the first line and the equation in the third line of the above expression, we obtain a system of two equations in $a$ and $b$. For example,
\begin{align*}
    a(x_u^2+v_v^2)-2bx_ux_v=0,\\
    a(x_uy_u+x_vy_v)-b(x_uy_v+x_vy_u)=0.
\end{align*}
Using the equations from those in system (3), we immediately get $a=0$, $b=0$ as the solution to the above simultaneous equations. Similarly, we obtain $c=0$, $d=0$, which implies that all double partial derivatives of $x$, $y$ with respect to $u$, $v$ are zero. Thus it follows that the transformation $(x,y)\to (u,v)$ which takes the maximal surface $\varphi(x,y)=(x,y,z(x,y))$ to the timelike minimal surface $\psi(u,v)=(u,v,z(u,v))$ must be a linear transformation.

\end{proof}

\begin{corollary}
The transformation that maps the maximal surface in $\mathbb{L}^3$ represented locally as $\varphi(x,y)=(x,y,z(x,y))$ to the timelike minimal surface in $\mathbb{L}^3$ represented locally as $\psi(u,v)=(u,v,z(u,v))$ also maps the singular points of $\varphi$ to the singular points of $\psi$.
\end{corollary}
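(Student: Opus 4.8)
The plan is to identify the conelike singular set of each surface with the zero locus of an explicit quadratic in the first derivatives of $z$, and then to show that the two loci correspond under $\psi$ by a short computation using the chain-rule identities and the algebraic relations (3) already obtained in the proof of the Lemma.

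First I would recall the analytic description of the singularities. For the graph $\varphi(x,y)=(x,y,z(x,y))$ in $\mathbb{L}^3$ with metric $dx^2+dy^2-dz^2$, the induced first fundamental form is $(1-z_x^2)\,dx^2-2z_xz_y\,dx\,dy+(1-z_y^2)\,dy^2$, with determinant $1-z_x^2-z_y^2$; hence $\varphi$ is a spacelike immersion exactly where $z_x^2+z_y^2<1$, and its conelike singularities are the points where the tangent plane degenerates to a lightlike plane, i.e. where $z_x^2+z_y^2=1$. Doing the same for the corresponding timelike minimal graph $\psi(u,v)=(u,v,z(u,v))$, regarded in the Lorentzian chart in which it is a zero-mean-curvature graph (metric $-du^2+dv^2+dz^2$, the chart in which $z(u,v)$ satisfies equation (2)), one finds that the induced metric has determinant $z_u^2-z_v^2-1$; so $\psi$ is a timelike immersion where $z_u^2-z_v^2<1$ and its singular set is $\{z_u^2-z_v^2=1\}$.

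The core step compares the two loci. From the Lemma, $z_u=x_uz_x+y_uz_y$ and $z_v=x_vz_x+y_vz_y$, whence
\[
z_u^2-z_v^2=(x_u^2-x_v^2)z_x^2+(y_u^2-y_v^2)z_y^2+2(x_uy_u-x_vy_v)z_xz_y .
\]
Substituting $x_u^2-x_v^2=1$, $y_u^2-y_v^2=1$ and $x_uy_u-x_vy_v=0$ from system (3) collapses the right-hand side to $z_x^2+z_y^2$, so $z_u^2-z_v^2=z_x^2+z_y^2$ identically. Therefore $z_u^2-z_v^2=1$ holds at a point of the $(u,v)$-domain if and only if $z_x^2+z_y^2=1$ holds at the corresponding point of the $(x,y)$-domain; that is, the singular set of the timelike minimal surface is exactly the $\psi$-image of the singular set of $\varphi$. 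Since $\psi$ is a linear isomorphism (its Jacobian satisfies $(x_uy_v-x_vy_u)^2=-1\neq0$ by (3)), this restriction is a bijection, and the induced ambient map, being linear and fixing the vertical axis, carries the lightlike tangent plane at a singular point of one surface to that at the corresponding singular point of the other, so conelike singularities are sent to conelike singularities.

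The only genuinely delicate point I anticipate is the bookkeeping in the second paragraph: fixing which Lorentzian chart the timelike minimal graph naturally sits in and recording its degeneracy locus as $\{z_u^2-z_v^2=1\}$ with the correct sign pattern; once that is settled, the identity $z_u^2-z_v^2=z_x^2+z_y^2$ is immediate from (3) and everything else is formal. I would also note, as a byproduct, that the two determinants are negatives of one another, $z_u^2-z_v^2-1=-(1-z_x^2-z_y^2)$, which re-confirms that $\psi$ carries the spacelike part of $\varphi$ onto the timelike part of $\psi$, consistent with the Lemma.
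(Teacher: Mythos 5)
Your proposal is correct and follows essentially the same route as the paper: you express $z_u$ and $z_v$ via the chain rule, expand $z_u^2-z_v^2$, and use the relations $x_u^2-x_v^2=1$, $y_u^2-y_v^2=1$, $x_uy_u-x_vy_v=0$ from system (3) to reduce the degeneracy locus $z_u^2-z_v^2=1$ to $z_x^2+z_y^2=1$, which is exactly the paper's computation. The only additions are your explicit derivation of the two degeneracy loci from the first fundamental forms and the remark that the metric determinants are negatives of one another, which the paper takes for granted but which do no harm.
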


\begin{proof}
The singular points of a maximal or timelike minimal surface in $\mathbb{L}^3$ are exactly those points at which the metric degenerates. For the maximal surface represented locally as $\varphi(x,y)=(x,y,z(x,y))$, the singular points are exactly those points $(x,y)$ where 
\begin{equation}
    z_x^2+z_y^2-1=0.
\end{equation}
Similarly, for the timelike minimal surface represented locally as $\psi(u,v)=(u,v,z(u,v))$, the singular points are exactly those points $(u,v)$ where
\begin{equation}
    z_u^2-z_v^2-1=0.
\end{equation}
Recall that the maximal surface $\varphi$ corresponds to the timelike minimal surface $\psi$ via the linear transformation $(x,y)\to (u,v)$. Thus, the partial derivatives $z_u$ and $z_v$ can be expressed as $z_u=x_uz_x+y_uz_y$ and $z_v=x_vz_x+y_vz_y$. The equation (5) can thus be rewritten as follows
\begin{align*}
    z_x^2(x_u^2-x_v^2)+z_y^2(y_u^2-y_v^2)+2z_xz_y(x_uy_u-x_vy_v)-1=0.
\end{align*}
From the system of equations (3), the above equation simplifies to $z_x^2+z_y^2-1=0$, but this is exactly the equation (4).\\
This concludes the proof.
\end{proof}
 
\begin{lemma}
The transformation from Lemma 1 establishes a one-one correspondence between maximal surfaces and timelike surfaces with conical singularities that can be locally represented as graphs. 
\end{lemma}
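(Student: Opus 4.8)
The plan is to upgrade the one-directional construction of Lemma 1 to a genuine bijection by exhibiting an inverse. The three things to establish are: the linear transformation $\psi$ of Lemma 1 is invertible; its inverse carries timelike minimal graphs back to maximal graphs; and the whole correspondence respects the class of surfaces with conelike singularities.

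First I would observe that the first set of coefficient equations (3) already forces $\psi$ to be invertible: they give $(x_u y_v - x_v y_u)^2 = -1$, so the Jacobian determinant is nonzero and the inverse $\psi^{-1}$ is again a linear map, with coefficient matrix the inverse of that of $\psi$. Solving (3) together with the second system of Lemma 1 shows moreover that, up to post-composition with the real linear automorphisms of each of the two equations, $\psi$ is the Wick rotation in one variable studied in \cite{DS} — acting on the real-analytic extension of the solution — while $\psi^{-1}$ is the Wick rotation in the opposite sense; in particular both send graphs to graphs.

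Next I would run the computation of Lemma 1 in reverse: starting from a timelike minimal surface $\psi(u,v)=(u,v,w(u,v))$ with $w$ solving the Born--Infeld equation (2) and applying $\psi^{-1}$, the same substitution of first and second partial derivatives — now expressing $w_u,w_v,w_{uu},w_{uv},w_{vv}$ through the $x,y$-derivatives of $w$ — turns (2) into the maximal surface equation (1), precisely because the conditions (3) are symmetric under the interchange of the two equations. Hence $\psi^{-1}$ maps every timelike minimal graph to a maximal graph. Since $\psi$ and $\psi^{-1}$ are mutually inverse linear maps, the induced maps $\Phi$ (maximal $\to$ timelike minimal) and $\Psi$ (timelike minimal $\to$ maximal) satisfy $\Psi\circ\Phi=\mathrm{id}$ and $\Phi\circ\Psi=\mathrm{id}$, which is the claimed one-one correspondence. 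For the singular structure I would invoke Corollary 1: $\psi$ takes the zero set of $z_x^2+z_y^2-1$ onto the zero set of $w_u^2-w_v^2-1$ and, being linear, carries a cone to a cone, so conelike singularities go to conelike singularities in both directions; thus the bijection restricts to the surfaces with conical singularities. One should also check chart-independence — if a maximal surface is written as a graph over two overlapping domains, the transition is a real linear symmetry of (1), which $\psi$ conjugates to a symmetry of (2), so $\Phi$ is well defined on surfaces and not merely on chosen charts.

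I expect the main obstacle to be precisely this interplay between linearity and reality: by (3) the map $\psi$ cannot be a \emph{real} linear transformation, so one must, following \cite{DS}, interpret it as acting on the holomorphic extension of $z(x,y)$ and then argue that the real slice obtained is again an honest real solution of (2) — and that the conelike structure, not merely the location of the singular set, survives this complexification. The algebraic inversion and the reverse substitution, by contrast, are routine once Lemma 1 is in hand.
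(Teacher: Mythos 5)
Your route is genuinely different from the paper's. You establish the correspondence by exhibiting an explicit two-sided inverse: $\psi$ is invertible because $(x_uy_v-x_vy_u)^2=-1$ forces a nonzero Jacobian, the reverse substitution turns the Born--Infeld equation back into the maximal surface equation, and hence $\Psi\circ\Phi=\Phi\circ\Psi=\mathrm{id}$. The paper instead devotes its entire proof to \emph{uniqueness}: it supposes two timelike minimal graphs $\psi$ and $\psi'$ arise from the same maximal surface via $\Phi$ and $\Phi'$, introduces the transition map $\sigma$ with $\sigma\circ\Phi=\Phi'$, and reruns the coefficient-matching computation of Lemma 1 to show that $\sigma$ is linear with $u_{u'}^2-u_{v'}^2=1$, $v_{v'}^2-v_{u'}^2=1$, $u_{u'}u_{v'}=v_{u'}v_{v'}=0$ and Jacobian determinant $\pm 1$, i.e.\ a rigid motion; it then argues symmetrically for two maximal surfaces mapping to one timelike minimal surface. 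Your inverse-map argument buys surjectivity cleanly, which the paper leaves implicit; the paper's argument buys well-definedness, which is the real content of the lemma.

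That well-definedness is the gap in your proposal. A ``one-one correspondence'' between surface classes requires that the partner of a given maximal surface be unique (up to rigid motion), and invertibility of one particular choice of $\psi$ does not rule out a second, inequivalent linear map $\psi_2$ satisfying the constraints of Lemma 1 and producing a genuinely different timelike minimal graph. You cover this only with the unproved assertion that solving (3) together with the second-order system pins $\psi$ down to the Wick rotation up to linear automorphisms of the two equations --- but that assertion is precisely the computation the paper carries out via $\sigma$, so as written your argument assumes the heart of the matter. Two smaller remarks: your claim that the conditions (3) are ``symmetric under interchange of the two equations'' is better justified by noting that the forward substitution identity is algebraic and hence reversible once $\psi$ is invertible, rather than by inspecting (3) directly; and you are right to flag that $(x_uy_v-x_vy_u)^2=-1$ admits no real solution, so $\psi$ is only linear after complexification --- a genuine subtlety that the paper itself defers until the proof of Theorem 1 and does not address in this lemma.
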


\begin{proof}
We show that by applying the transformation to a maximal surface, the corresponding timelike minimal surface we arrive at is unique.\\
Consider a maximal surface $\varphi=(x,y,z(x,y))$. Suppose $\psi=(u,v,z(u,v))$ and $\psi^{'}=(u^{'},v^{'},z(u^{'},v{'}))$ are two timelike minimal surfaces generated from $\varphi$ by applying linear transformations $\Phi$ and $\Phi^{'}$ respectively. Consider a transformation $\sigma$, which maps $\psi$ to $\psi^{'}$ and makes the triangle commute i.e., $\sigma\circ\Phi=\Phi^{'}$. Now, we may treat the coordinates $u^{'}$ and $v^{'}$ as functions of $u$ and $v$. We have as before $\frac{\partial}{\partial u^{'}}=\frac{\partial}{\partial u}\frac{\partial u}{\partial u^{'}}+\frac{\partial}{\partial v}\frac{\partial v}{\partial v^{'}}$ and $\frac{\partial}{\partial v^{'}}=\frac{\partial}{\partial u}\frac{\partial u}{\partial v^{'}}+\frac{\partial}{\partial v}\frac{\partial v}{\partial v^{'}}$ from which we arrive at 
\begin{align*}
    z_{u^{'}}=u_{u^{'}}z_u+v_{u^{'}}z_v,\;\;\;z_{v^{'}}=u_{v^{'}}z_u+v_{v^{'}}z_v.
\end{align*}
Similarly, the second partial derivatives $z_{u^{'}u^{'}}$, $z_{u^{'}v^{'}}$, $z_{v^{'}v^{'}}$ can be expressed as follows
\begin{align*}
    z_{u^{'}u^{'}}=u_{u^{'}u^{'}}z_u+u_{u^{'}}^2z_{uu}+2u_{u^{'}}v_{u^{'}}z_{uv}+v_{u^{'}u^{'}}z_v+v_{u^{'}}^2z_{vv}\\
    z_{u^{'}v^{'}}=u_{u^{'}v^{'}}z_u+u_{u^{'}}u_{v^{'}}z_{uu}+(u_{u^{'}}v_{v^{'}}+u_{v^{'}}v_{u^{'}})z_{uv}+v_{u^{'}v^{'}}z_v+v_{u^{'}}v_{v^{'}}z_{vv}\\
    z_{v^{'}v^{'}}=u_{v^{'}v^{'}}z_u+u_{v^{'}}^2z_{uu}+2u_{v^{'}}v_{v^{'}}z_{uv}+v_{v^{'}v^{'}}z_v+v_{v^{'}}^2z_{vv}.
\end{align*}
Now $z(u^{'},v^{'})$ satisfies the partial differential equation $(z_{u^{'}}^2-1)z_{v^{'}v^{'}}-2z_{u^{'}}z_{v^{'}}z_{u^{'}v^{'}}+(1+z_{v^{'}}^2)z_{u^{'}u^{'}}=0$. Substituting the above expressions for partial derivatives of $z$ with respect to $u^{'}$ and $v^{'}$ into this partial differential equation, we arrive at the following equation
\begin{align*}
    (u_{u^{'}}^2z_u^2+v_{u^{'}}^2z_v^2+2u_{u^{'}}v_{u^{'}}z_uz_v-1)(u_{v^{'}v^{'}}z_u+u_{v^{'}}^2z_{uu}+2u_{v^{'}}v_{v^{'}}z_{uv}+v_{v^{'}v^{'}}z_v+\\v_{v^{'}}^2z_{vv})
    -2(u_{u^{'}}u_{v^{'}}z_u^2+v_{u^{'}}v_{v^{'}}z_v^2+(u_{u^{'}}v_{v^{'}}+v_{u^{'}}u_{v^{'}})z_uz_v)(u_{u^{'}v^{'}}z_u+u_{u^{'}}u_{v^{'}}z_{uu}\\+(u_{u^{'}}v_{v^{'}}+u_{v^{'}}v_{u^{'}})z_{uv}+v_{u^{'}v^{'}}z_v+v_{u^{'}}v_{v^{'}}z_{vv})
    +(u_{v^{'}}^2z_u^2+v_{v^{'}}^2z_v^2+2u_{v^{'}}v_{v^{'}}z_uz_v\\+1)(u_{u^{'}u^{'}}z_u+u_{u^{'}}^2z_{uu}+2u_{u^{'}}v_{u^{'}}z_{uv}+v_{u^{'}u^{'}}z_v+v_{u^{'}}^2z_{vv})=0.
\end{align*}
Since $(u,v,z(u,v))$ is a timelike minimal surface by assumption, we require that the above equation is, in fact, the partial differential equation $(z_u^2-1)z_{vv}-2z_uz_vz_{uv}+(1+z_v^2)z_{uu}=0$. The calculation that follows is similar to that in the proof of Lemma 1. Comparing coefficients of partial derivatives of $z$ with respect to $u$ and $v$ in both equations, it is easy to see that all second partial derivatives $u_{u^{'}u^{'}}$, $u_{u^{'}v^{'}}$, $u_{v^{'}v^{'}}$, $v_{u^{'}u^{'}}$, $v_{u^{'}v^{'}}$, $v_{v^{'}v^{'}}$ are zero, which implies that $\sigma$ is a linear transformation. Further, we arrive at the equations $(u_{u^{'}}v_{v^{'}}+u_{v^{'}}v_{u^{'}})^2=1$, $v_{v^{'}}^2-v_{u^{'}}^2=1$, $v_{u^{'}}v_{v^{'}}=0$, $u_{u^{'}}^2-u_{v^{'}}^2=1$, and $u_{u^{'}}u_{v^{'}}=0$. Thus the determinant of the Jacobian matrix of the transformation $\begin{pmatrix}u_{u^{'}} & u_{v^{'}} \\v_{u^{'}} & v_{v^{'}}\end{pmatrix}$ is either 1 or -1. Since $\sigma$ is a linear transformation, this means that $\psi=(u,v,z(u,v))$ is the unique timelike minimal surface corresponding to $\varphi=(x,y,z(x,y))$ up to a rigid motion. \\
The fact that $\psi=(u,v,z(u,v))$ and $\psi^{'}=(u^{'},v^{'},z(u^{'},v^{'}))$ correspond to the same maximal surface $\varphi=(x,y,z(x,y))$ has been used, since they have the "same" third coordinate $z$ as $\varphi$.\\
Similarly, if $\varphi=(x,y,z(x,y))$ and $\varphi^{'}=(x^{'},y^{'},z(x^{'},y^{'}))$ are two maximal surfaces that map to the same timelike minimal surface $\psi=(u,v,z(u,v))$ via linear transformations $\Phi$ and $\Phi^{'}$, then it can be shown that $\varphi$ and $\varphi^{'}$ are the same maximal surface up to a rigid motion, in the same way as above. \\
We conclude that the linear transformation from Lemma 1 establishes a one-one correspondence between maximal surfaces and timelike minimal surfaces with conical singularities that can be locally represented as graphs in $\mathbb{L}^3$. 
\end{proof}

In order to arrive at a Kobayashi type result for timelike minimal surfaces, we require the map that transforms a maximal surface to its corresponding timelike minimal surface to preserve the one-one property of the Gauss map. We state this result as the following lemma.

\begin{lemma}
If the Gausss map of a maximal surface is one-one, then the Gauss map of the corresponding timelike minimal surface is also one-one. 
\end{lemma}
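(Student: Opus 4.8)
The plan is to push the injectivity of the Gauss map across the linear correspondence of Lemma~1. First I would record the two Gauss maps in graph coordinates. For $\varphi(x,y)=(x,y,z(x,y))$ the unit normal is $N_\varphi=(z_x,z_y,1)/\sqrt{1-z_x^2-z_y^2}$, valued in the hyperbolic plane $\mathbb{H}^2\subset\mathbb{L}^3$, its radicand $-(z_x^2+z_y^2-1)$ being nonzero away from the singular set (4); for $\psi(u,v)=(u,v,z(u,v))$ the unit normal is $N_\psi=(z_u,-z_v,1)/\sqrt{1-z_u^2+z_v^2}$ (up to the orientation conventions in force), valued in the de Sitter surface $\mathbb{S}^2_1\subset\mathbb{L}^3$, its radicand $-(z_u^2-z_v^2-1)$ being nonzero away from the singular set (5). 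The elementary observation is that the third component of either normal is finite and nonvanishing on the regular set, so the map $(a,b,c)\mapsto(a/c,b/c)$ is injective on the image of each Gauss map and carries $N_\varphi$ to $(z_x,z_y)$ and $N_\psi$ to $(z_u,-z_v)$. Consequently $N_\varphi$ is one-one if and only if the gradient map $\mathcal{G}_\varphi\colon(x,y)\mapsto(z_x,z_y)$ is injective, and likewise $N_\psi$ is one-one if and only if $\mathcal{G}_\psi\colon(u,v)\mapsto(z_u,z_v)$ is injective (the sign of the second slot being irrelevant).

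Next I would invoke Lemma~1: the change of variables carrying $\varphi$ to $\psi$ is linear, so, writing $x,y$ as affine functions of $u,v$, the partials $x_u,x_v,y_u,y_v$ are constants obeying system (3), and since the third coordinate is preserved the chain rule gives $z_u=x_uz_x+y_uz_y$ and $z_v=x_vz_x+y_vz_y$. Equivalently, $\mathcal{G}_\psi$ arises from $\mathcal{G}_\varphi$ by premultiplying the column $(z_x,z_y)$ by the constant matrix $A=\left(\begin{smallmatrix}x_u&y_u\\x_v&y_v\end{smallmatrix}\right)$, whose determinant satisfies $(\det A)^2=-1$ by (3) and is in particular nonzero, so $A$ is invertible. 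Hence $\mathcal{G}_\psi=A\circ\mathcal{G}_\varphi\circ T$, where $T$ is the linear isomorphism $(u,v)\mapsto(x,y)$ of Lemma~1. If the Gauss map of $\varphi$ is one-one, then $\mathcal{G}_\varphi$ is injective by the first paragraph, so $\mathcal{G}_\psi$ is a composition of injections and therefore injective, and a second appeal to the first paragraph shows the Gauss map of $\psi$ is one-one. Since Corollary~1 already identifies the two singular sets, the regular domains on which $\mathcal{G}_\varphi$ and $\mathcal{G}_\psi$ are compared correspond under $T$, so no mismatch arises there.

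I expect two points to require care. The minor one is the equivalence ``Gauss map one-one $\iff$ gradient map injective'': one must verify that the normal genuinely recovers the gradient, which forces a restriction to the regular set and to a definite sheet of $\mathbb{H}^2$ (resp.\ a definite half of $\mathbb{S}^2_1$, the one where the last coordinate is positive). The real obstacle is that system (3) has \emph{no real solution}, since $(x_uy_v-x_vy_u)^2=-1$; the transformation of Lemma~1 is thus a \emph{Wick rotation}, so $A$ and $T$ are invertible only over $\mathbb{C}$ --- equivalently, after passing to the local real slice on which $z(u,v)$ actually lives, which uses that $z$ is real-analytic there (automatic for a maximal surface away from its singularities, where the maximal surface equation is elliptic); for the model choice $x=u$, $y=iv$ one has $T(u,v)=(u,iv)$ and $A=\mathrm{diag}(1,i)$. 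I would spell this out so that ``a composition of injections is an injection'' is a genuine argument in the appropriate category rather than a purely formal one. The converse implication follows verbatim with $T^{-1}$ and $A^{-1}$ in place of $T$ and $A$; combined with Kobayashi's classification of maximal graphs with injective Gauss map \cite{Koba2}, this is precisely what yields the Kobayashi-type theorem for timelike minimal surfaces.
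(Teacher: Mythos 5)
Your proposal is correct and is essentially the paper's own argument: both use the chain rule with the constant Jacobian of the linear change of variables from Lemma~1 to identify the Gauss map (equivalently the gradient map) of $\psi$ with that of $\varphi$ composed with invertible linear maps, so injectivity transfers. The only difference is presentational --- you phrase it as $\mathcal{G}_\psi=A\circ\mathcal{G}_\varphi\circ T$ while the paper argues pointwise with two points whose normals agree --- and you additionally flag the genuine subtlety, which the paper's proof passes over silently, that system (3) forces $(x_uy_v-x_vy_u)^2=-1$, so the transformation is a Wick rotation and the invertibility of $A$ and $T$ must be interpreted over $\mathbb{C}$ as in the proof of Theorem~1.
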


\begin{proof}
Consider a maximal surface $\varphi=(x,y,z(x,y))$ and let $\psi=(u,v,z(u,v))$ be the corresponding timelike minimal surface. The Gauss map maps a point $(x,y,z)$ on $\varphi$ to the vector $(z_x,z_y,1)$ which can be viewed as a point on the unit two-sheeted hyperboloid $X^2+Y^2-Z^2=-1$. Similarly, a point $(u,v,z(u,v))$ on $\psi$ is mapped to a point $(z_u,-z_v,1)$ on the unit one-sheeted hyperboloid $-U^2+V^2+W^2=1$ by the Gauss map.\\
We assume that the Gauss map of the maximal surface $\varphi$ is one-one. Now let $(x_1,y_1,z(x_1,y_1))$ and $(x_2,y_2,z(x_2,y_2))$ be two points $\varphi$ which are mapped to the points $(u_1,v_1,z(u_1,v_1))$ and $(u_2,v_2,z(u_2,v_2))$ respectively on the corresponding timelike minimal surface $\psi$ by the map $\Phi$. We denote, for convenience, the partial derivative $z_u$ evaluated at $(u_1,v_1)$ by $z_{u_1}$ i.e., $z_u(u_1,v_1)=z_{u_1}$. With this notation, the image of the Gauss map at $(u_1,v_1,z(u_1,v_1))$ is the point $(z_{u_1},-z_{v_1},1)$ and at $(u_2,v_2,z(u_2,v_2))$ is the point $(z_{u_2},-z_{v_2},1)$. Similarly, the image of the Gauss map at $(x_1,y_1,z(x_1,y_1))$ is the point $(z_{x_1},z_{y_1},1)$ and at $(x_2,y_2,z(x_2,y_2))$ is the point $(z_{x_2},z_{y_2},1)$. Using the expressions $\frac{\partial}{\partial x}=\frac{\partial}{\partial u}\frac{\partial u}{\partial x}+\frac{\partial}{\partial v}\frac{\partial v}{\partial x}$ and $\frac{\partial}{\partial y}=\frac{\partial}{\partial u}\frac{\partial u}{\partial y}+\frac{\partial}{\partial v}\frac{\partial v}{\partial y}$ we can write 
\begin{align*}
    (z_{x_1},z_{y_1},1)=(u_xz_{u_1}+v_xz_{v_1},u_yz_{u_1}+v_yz_{v_1},1)\\
    (z_{x_2},z_{y_2},1)=(u_xz_{u_2}+v_xz_{v_2},u_yz_{u_2}+v_yz_{v_2},1).
\end{align*}
Recall from Lemma 1 that all second partial derivatives of $x$ and $y$ with respect to $u$ and $v$ are zero, i.e. all first partial derivatives are constants, hence the transformation $\Phi$ is linear. The same is true of the inverse transformation and so the partial derivatives $u_x$, $u_y$, $v_x$ and $v_y$ may be viewed as constants.\\
Suppose that the images of the Gauss map at $(u_1,v_1,z(u_1,v_1))$ and $(u_2,v_2,z(u_2,v_2))$ agree, i.e., $(z_{u_1},-z_{v_1},1)=(z_{u_2},-z_{v_2},1)$ which implies $z_{u_1}=z_{u_2}$ and $z_{v_1}=z_{v_2}$. This further implies $(u_xz_{u_1}+v_xz_{v_1},u_yz_{u_1}+v_yz_{v_1},1)=(u_xz_{u_2}+v_xz_{v_2},u_yz_{u_2}+v_yz_{v_2},1)$, i.e., $(z_{x_1},z_{y_1},1)=(z_{x_2},z_{y_2},1)$ which means that the images of the Gauss map at $(x_1,y_1,z(x_1,y_1))$ and $(x_2,y_2,z(x_2,y_2))$ agree. Since we have assumed the Gauss map of $\varphi$ to be one-one, it must be that $(x_1,y_1,z(x_1,y_1))=(x_2,y_2,z(x_2,y_2))$ and it immediately follows that $(u_1,v_1,z(u_1,v_1))=(u_2,v_2,z(u_2,v_2))$.\\
To summarize, we have shown that if $(z_{u_1},-z_{v_1},1)=(z_{u_2},-z_{v_2},1)$ then $(u_1,v_1,z(u_1,v_1))=(u_2,v_2,z(u_2,v_2))$ i.e., that the Gauss map of $\psi$ is one-one given that the Gauss map of the corresponding maximal surface $\varphi$ is one-one. This concludes the proof. 
\end{proof}

\begin{theorem}
Let $S$ be a complete timelike minimal surface in $\mathbb{L}^3$ with at least one conelike singularity. Suppose the Gauss map of $S$ is one-one. Then $S$ is congruent to the surface defined by $\sqrt{x^2-y^2}+a\textrm{sinh}(\frac{z}{a})=0$, where $a$ is a nonzero real constant. 
\end{theorem}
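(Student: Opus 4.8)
The plan is to reduce the statement to Kobayashi's classification of complete maximal surfaces with conelike singularities \cite{Koba2} by running the correspondence of Lemmas 1--3 in the reverse direction. Throughout this section a timelike minimal surface is understood, locally, as a graph $\psi(u,v)=(u,v,z(u,v))$; by Lemma 2 the correspondence between such surfaces and maximal surfaces that are locally graphs is one-one, so to the given surface $S$ there corresponds a unique maximal surface $\varphi(x,y)=(x,y,z(x,y))$, the two being related by the linear transformation $\Phi$ of Lemma 1 and its inverse. Recall from the proof of Lemma 1 that $\Phi$ has constant invertible derivative and fixes the height function $z$, and that the third relation in the system (3), namely $(x_uy_v-x_vy_u)^2=-1$, already signals that $\Phi$ is not a real map but the complex Wick rotation relating the two governing equations (composed with real linear isometries of $\mathbb{L}^3$), exactly as in \cite{DS}.

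Next I would verify that $\varphi$ satisfies the three hypotheses of Kobayashi's theorem. (i) The surface $\varphi$ has at least one conelike singularity: by Corollary 1 the bijection $\Phi$ carries the singular locus $\{z_x^2+z_y^2-1=0\}$ of $\varphi$ onto the singular locus $\{z_u^2-z_v^2-1=0\}$ of $S$, so a conelike singular point of $S$ is the image of one of $\varphi$. (ii) The Gauss map of $\varphi$ is one-one: this is the converse of Lemma 3. In the proof of Lemma 3 the Gauss image $(z_x,z_y,1)$ of $\varphi$ and the Gauss image $(z_u,-z_v,1)$ of $S$ are related by a fixed invertible linear map, together with the linear (hence bijective) change of base point $(x,y)\leftrightarrow(u,v)$; this relation can be read in either direction, so injectivity of one Gauss map forces injectivity of the other. (iii) The surface $\varphi$ is complete. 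This is the delicate point: one has to fix the precise notion of completeness used for timelike minimal surfaces with conelike singularities and then observe that, since $\Phi$ is affine with constant invertible linear part and preserves $z$, it is a bi-Lipschitz identification of the parameter domains matching divergent paths on $S$ with divergent paths on $\varphi$, so that the (singular) metric of $\varphi$ is complete as soon as that of $S$ is. I expect this transfer of completeness, and making it precise, to be the main obstacle, and the place where the standing hypothesis that the surface is locally a graph of a smooth function is genuinely used.

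With (i)--(iii) in hand, Kobayashi's theorem \cite{Koba2} identifies $\varphi$, up to a rigid motion of $\mathbb{L}^3$, with the maximal catenoid $\sqrt{x^2+y^2}=a\sinh(z/a)$ for some nonzero real constant $a$ (equivalently $z=a\operatorname{arcsinh}(\sqrt{x^2+y^2}/a)$), whose only singularity is the conelike point at the origin. It then remains to transport this description back through $\Phi$. Applying the Wick rotation of \cite{DS}, which replaces one of the planar coordinates, say $y$, by $iy$ — all the auxiliary real isometries already having been absorbed into the ``up to rigid motion'' — turns $\sqrt{x^2+y^2}=a\sinh(z/a)$ into $\sqrt{x^2-y^2}=a\sinh(z/a)$, and composing with the Lorentz reflection $z\mapsto -z$ (using that $\sinh$ is odd) yields the stated equation $\sqrt{x^2-y^2}+a\sinh(z/a)=0$. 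Since $\Phi$ was itself determined only up to rigid motions of $\mathbb{L}^3$, this says exactly that $S$ is congruent to that surface, completing the argument. As a final check one should confirm that the surface $\sqrt{x^2-y^2}+a\sinh(z/a)=0$ is itself a complete timelike minimal surface with a single conelike singularity and injective Gauss map — which can be read off from the maximal catenoid via the same correspondence — so that the classification loses nothing.
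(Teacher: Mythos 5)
Your proposal follows essentially the same route as the paper: run the correspondence backwards (the Wick rotation $y\mapsto iy$ composed with real rigid motions), check that the resulting maximal surface inherits a conelike singularity, an injective Gauss map, and completeness, invoke Kobayashi's classification, and transport the catenoid $\sqrt{x^2+y^2}+a\sinh(z/a)=0$ back to obtain $\sqrt{x^2-y^2}+a\sinh(z/a)=0$. The points you flag as delicate --- the transfer of completeness and the converse direction of Lemma 3 --- are in fact asserted without further argument in the paper's own proof as well, so your treatment is, if anything, more explicit about where the remaining work lies.
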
 

Note that the result is not true if the assumption that the Gauss map is one-one is not made. An example will be given in the following section.

\begin{proof}

We apply inverse of the correspondence, namely  $\Phi^{-1}$ which maps our timelike minimal surface $S$ to $\tilde{S}$ preserving the conical singularity.

Now, recall that $\Phi^{-1}$ involved a map linear map $\psi^{-1}: (x,y) \to (u,v)$.
If we extend $\psi^{-1}: \mathbb{C}^2 \to \mathbb{C}^2$ and let $(x,y, z) \in \mathbb{C}^3 $ and $(u,v, z) \in  \mathbb{C}^3$ the functional equations of maximal and timelike minimal equations do not change.
Thus $\psi^{-1}$ is a linear transformation from $\mathbb{C}^2 \to \mathbb{C}^2$ (by same argument as in lemma...).
Now making the linear transformation,  $y \to iy$, we get a maximal surface (in $\mathbb{ C}^3$).

Restricting to $(x,y) \in \mathbb{R}^2$, this gives a  unique real maximal surface (since it is given by the restriction of $\Phi^{-1}$). This surface is complete, has Gauss map $1:1$ and has a conical singularity.

We now apply  Kobyashi's result \cite{Koba2}, namely
if  $\tilde{S}$ be a complete maximal surface in $\mathbb{L}^{3}$ with at least one
conelike singularity such  that the Gauss map of $\tilde{S}$ is 1: 1, then $\tilde{S}$ is congruent
to the surface  defined by $\sqrt{x^{2}+y^{2}}+a\sinh(z/a)=0$, where $a$ is a nonzero
real constant.

Applying $\Phi$ to this we have that our timelike minimal surface is given by the transformation 
$y \to -iy$, and restriction to the reals. 

This yields that our timelike minimal surface is of the form $\sqrt{x^2-y^2}+a\textrm{sinh}(\frac{z}{a})=0$, where $a$ is a nonzero real constant. 
\end{proof}

Note that there is another transformation which maps timelike minimal surface to maximal surfaces and vice versa.
$(x,y,z) \to (ix,iy,iz)$ but making this transformation to $\sqrt{x^{2}+y^{2}}+a\sinh(z/a)=0$, where $a$ is a nonzero
real constant, does not preserve the property of  Gauss map being  $1:1$. In fact it gives the example which Kobyashi  mentions in his paper in a remark in article $1$.

\section{Euler-Ramanujan identity and timelike minimal  surfaces}

In Dey \cite{D}, and Dey and Singh \cite{DS}, various new identities were obtained from Weierstrass-Enneper representations of minimal surfaces in $\mathbb{R}^3$ and maximal surfaces in $\mathbb{L}^3$.\\
In this section, we obtain new identities from existing Euler-Ramanujan identities and Weierstrass-type representations of timelike minimal surfaces in $\mathbb{L}^3$. It is worth noting that there exist many  such representations of timelike minimal surfaces, the more notable of which are due to Magid\cite{Mag}, Lee\cite{Lee} and Kim et al\cite{sms}. The derivations of these representation formulae make use of novel concepts. One method, for example, involves \emph{paracomplex numbers}, also known as Lorentz numbers.  
\\
The Weierstrass representation formula for timelike minimal surfaces in $\mathbb{L}^3$, as derived in \cite{Lee}, is given by
\begin{align*}
    x=\frac{1}{2}\int(1+q^2)f(u)du-(1+r^2)g(v)dv,\\
    y=\frac{-1}{2}\int(1-q^2)f(u)du+(1-r^2)g(v)dv,\\
    z=-\int qf(u)du+rg(v)dv,
\end{align*}
where $u,v$ are null coordinates and $(q,r)$ is the projected Gauss map of the surface. The null coordinates are given by $u=x+y$ and $v=-x+y$, where $(x,y)$ are Lorentz isothermal coordinates for the minimal surface.

\subsubsection{Identity corresponding to Lorentz helicoid with spacelike axis}

The Weierstrass data corresponding to the Lorentz helicoid with spacelike axis is 
\begin{equation*}
    q(u)=-e^u,\,\,f(u)=-e^{-u},\,\,r(v)=e^{-v},\,\,g(v)=-e^v.
\end{equation*}
The Lorentz helicoid in parametric form is given by
\begin{equation*}
    X(u,v)=-(\textrm{sinh}u+\textrm{sinh}v,\textrm{cosh}u+\textrm{cosh}v,u+v).
\end{equation*}
In terms of the isothermal parameters $(x,y)$, the Lorentz helicoid is given by 
\begin{equation*}
    X^{'}(x,y)=-(2\textrm{cosh}x\textrm{sinh}y,2\textrm{cosh}x\textrm{cosh}y,2y).
\end{equation*}
Hence it is clear that the Lorentz helicoid with spacelike axis is the graph of the function 
\begin{equation*}
    \frac{X}{Y}=\textrm{tanh}\left(\frac{Z}{2}\right).
\end{equation*}
Now we define a surface $\tilde{X}$ as follows 
\begin{equation*}
    \tilde{X}(z,\bar{z})=-(\textrm{sinh}z+\textrm{sinh}\bar{z},\textrm{cosh}z+\textrm{cosh}\bar{z},z+\bar{z}).
\end{equation*}
where $z=y+ix$ and $\bar{z}=y-ix$ and $x,y$ are isothermal coordinates for the surface $X$ as before. Expanding the above expression in terms of the isothermal parameters $x,y$ yields
\begin{equation*}
    \tilde{X}^{'}(x,y)=-(2\textrm{cos}x\textrm{sinh}y,2\textrm{cos}x\textrm{cosh}y,2y)
\end{equation*}
Let 
\begin{align*}
    X=2\textrm{cos}x\textrm{sinh}y=\textrm{sinh}z+\textrm{sinh}\bar{z},\\
    Y=2\textrm{cos}x\textrm{cosh}y=\textrm{cosh}z+\textrm{cosh}\bar{z},\\
    Z=2y=2\textrm{Re}z.
\end{align*}
Again it is clear that $\tilde{X}$ is also the graph of the function $\frac{X}{Y}=\textrm{tanh}(\frac{Z}{2})$, from which we get 
\begin{equation*}
    \frac{X}{Y}=\frac{\textrm{sinh}(\frac{Z}{2})}{\textrm{cosh}(\frac{Z}{2})}=\frac{i\textrm{sin}(\frac{iZ}{2})}{\textrm{cos}(\frac{iZ}{2})}=\frac{i\textrm{cos}(\frac{iZ}{2}+\frac{\pi}{2})}{\textrm{cos}(\frac{iZ}{2})}.
\end{equation*}
In the first Ramanujan identity let $A=\frac{iZ}{2}$ and $X=\frac{\pi}{2}$. Then, using the parametric form of the surface $\tilde{X}$, we get
\begin{align*}
    \frac{\textrm{sinh}z+\textrm{sinh}\bar{z}}{\textrm{cosh}z+\textrm{cosh}\bar{z}}=i\prod_{k=1}^\infty\left(1-\frac{\frac{\pi}{2}}{(k-\frac{1}{2})\pi-i\textrm{Re}z}\right)\left(1+\frac{\frac{\pi}{2}}{(k-\frac{1}{2})\pi+i\textrm{Re}z}\right).
\end{align*}
This can be rewritten as follows to arrive at the identity corresponding to the Lorentz helicoid with spacelike axis
\begin{align*}
    \frac{\textrm{sinh}z+\textrm{sinh}\bar{z}}{\textrm{cosh}z+\textrm{cosh}\bar{z}}=i\prod_{k=1}^\infty\left(\frac{(k-1)\pi-i\textrm{Re}z}{(k-\frac{1}{2})\pi-i\textrm{Re}z}\right)\left(\frac{k\pi+i\textrm{Re}z}{(k-\frac{1}{2})\pi+i\textrm{Re}z}\right).
\end{align*}

\subsubsection{Identity corresponding to Lorentz helicoid with timelike axis}
The Weierstrass data corresponding to this surface is 
\begin{equation*}
    q(u)=\frac{\textrm{sin}u}{-1+\textrm{cos}u},\,\,f(u)=-1+\textrm{cos}u,\,\,r(v)=\frac{\textrm{sin}v}{1+\textrm{cos}v},\,\,g(v)=-(1+\textrm{cos}v).
\end{equation*}
The parametric form of the Lorentz helicoid is given by 
\begin{equation*}
    X(u,v)=-(u+v,\textrm{sin}u+\textrm{sin}v,-\textrm{cos}u-\textrm{cos}v),
\end{equation*}
where $(u,v)$ are null coordinates. In terms of the isothermal parameters $x,y$, the Lorentz helicoid with timelike axis is given by 
\begin{equation*}
    X^{'}(x,y)=-(2y,2\textrm{cos}x\textrm{sin}y,-2\textrm{cos}x\textrm{cos}y).
\end{equation*}
Thus it follows that the Lorentz helicoid with timelike axis can be expressed as the graph of the function
\begin{equation*}
    \frac{Y}{Z}=-\textrm{tan}\left(\frac{X}{2}\right).
\end{equation*}
Now we difene a surface $\tilde{X}$ as follows
\begin{equation*}
    \tilde{X}(z,\bar{z})=-(z+\bar{z}, \textrm{sin}z+\textrm{sin}\bar{z},-\textrm{cos}z-\textrm{cos}\bar{z}).
\end{equation*}
The above expression is expanded in terms of the insothermal parameters $x,y$ (of the surface $X$) which yields
\begin{equation*}
    \tilde{X}^{'}(x,y)=-(2y,2\textrm{cosh}x\textrm{sin}y,-2\textrm{cosh}x\textrm{cos}y).
\end{equation*}
Let
\begin{align*}
    X=2y=z+\bar{z}=2\textrm{Re}z\\
    Y=2\textrm{cosh}x\textrm{sin}y=\textrm{sin}z+\textrm{sin}\bar{z}\\
    Z=-2\textrm{cosh}x\textrm{cos}y=-\textrm{cos}z-\textrm{cos}\bar{z}.
\end{align*}
Hence it follows that $\tilde{X}$ is also the graph of the function $\frac{Y}{Z}=-\textrm{tan}(\frac{X}{2})$. This can be rewritten as 
\begin{equation*}
    \frac{Y}{Z}=-\frac{\textrm{sin}(\frac{X}{2})}{\textrm{cos}(\frac{X}{2})}=-\frac{\textrm{cos}(\frac{X}{2}+\frac{\pi}{2})}{\textrm{cos}(\frac{X}{2})}.
\end{equation*}
In the first Ramanujan identity, let $A=\frac{X}{2}$ and $X=\frac{\pi}{2}$. Then, using the parametric form of the surface $\tilde{X}$, we obtain
\begin{align*}
    \frac{\textrm{sin}z+\textrm{sin}\bar{z}}{\textrm{cos}z+\textrm{cos}\bar{z}}=\prod_{k=1}^\infty\left(1-\frac{\frac{\pi}{2}}{(k-\frac{1}{2})\pi-\textrm{Re}z}\right)\left(1+\frac{\frac{\pi}{2}}{(k-\frac{1}{2})\pi+\textrm{Re}z}\right).
\end{align*}
This can be rewritten as follows to arrive at the identity corresponding to the Lorentz helicoid with timelike axis
\begin{align*}
    \frac{\textrm{sin}z+\textrm{sin}\bar{z}}{\textrm{cos}z+\textrm{cos}\bar{z}}=\prod_{k=1}^\infty \left(\frac{(k-1)\pi-\textrm{Re}z}{(k-\frac{1}{2})\pi-\textrm{Re}z}\right)\left(\frac{k\pi+\textrm{Re}z}{(k-\frac{1}{2})\pi+\textrm{Re}z}\right).
\end{align*}
\\
\section{Acknowledgements}

I would like to thank my mentor Professor Rukmini Dey, ICTS, for her constant support, guidance and feedback throughout the duration of this project. I would also like to thank Rahul Singh for helpful discussions and comments. Finally, I would like to thank the staff of ICTS for their hospitality and for providing me with an optimal working environment throughout the duration of my visiting student fellowship.


\begin{thebibliography}{10}
\bibitem{D}Dey, R. "Ramanujan's identities, minimal surfaces and solitons", Proc. Indian Acad. Sci, 126(3), 421-431, (2016).
\bibitem{DS}Dey, R., and Singh, R. K.. "Born–Infeld solitons, maximal surfaces, and Ramanujan’s identities." \textit{Archiv der Mathematik} 108.5 (2017): 527-538.
\bibitem{Koba} Kobayashi, Osamu. "Maximal surfaces in the 3-dimensional Minkowski space $L^3$." \textit{Tokyo Journal of Mathematics} 6.2 (1983): 297-309.
\bibitem{Koba2} Kobayashi, Osamu. "Maximal surfaces with conelike singularities." \textit{Journal of the Mathematical Society of Japan} 36.4 (1984): 609-617.
\bibitem{Mag} MAGID, Martin A. "Timelike surfaces in Lorentz 3-space with prescribed mean curvature and Gauss map." \textit{Hokkaido Mathematical Journal} 20.3 (1991): 447-464.
\bibitem{Lee} Lee, Sungwook. "Weierstrass representation for timelike minimal surfaces in Minkowski 3-space." \textit{arXiv preprint math/0608726} (2006).


\bibitem{sms} Kim, Young-Wook, et al. "Spacelike maximal surfaces, timelike minimal surfaces, and Björling representation formulae." \textit{Journal of the Korean Mathematical Society} 48.5 (2011): 1083-1100.
\end{thebibliography}
\end{document}